\renewenvironment{proof}[1][\proofname]{\par \pushQED{\qed} \normalfont
  \topsep6\p@\@plus6\p@ \trivlist \itemindent\z@
  \item[\hskip\labelsep\bfseries
    #1\@addpunct{.}]\ignorespaces
}{
  \popQED\endtrivlist\@endpefalse
}
    \renewcommand{\theequation}{{\thesection}.\@arabic\c@equation} 
\renewcommand{\subsection}[1]{\stepcounter{subsection}\vspace{2mm}
\par{\bf \thesubsection.  \bfseries #1.}}
\def\section{\@ifstar\unnumberedsection\numberedsection}
\def\numberedsection{\@ifnextchar[
  \numberedsectionwithtwoarguments\numberedsectionwithoneargument}
\def\unnumberedsection{\@ifnextchar[
  \unnumberedsectionwithtwoarguments\unnumberedsectionwithoneargument}
\def\numberedsectionwithoneargument#1{\numberedsectionwithtwoarguments[#1]{#1}}
\def\unnumberedsectionwithoneargument#1{\unnumberedsectionwithtwoarguments[#1]{#1}}
\def\numberedsectionwithtwoarguments[#1]#2{%
  \ifhmode\par\fi
  \removelastskip
  \vskip 4ex\goodbreak
  \refstepcounter{section}%
  \noindent
  \begingroup
  \leavevmode\centering\scshape\bfseries
  \thesection.
  #2
  \par
  \endgroup
  \vskip 1ex\nobreak
  \addcontentsline{toc}{section}{%
    \protect\numberline{\thesection}%
    #1}%
  }
\def\unnumberedsectionwithtwoarguments[#1]#2{%
  \ifhmode\par\fi
  \removelastskip
  \vskip 2ex\goodbreak
  \noindent
  \begingroup
  \leavevmode\centering\scshape\bfseries
  \leavevmode\centering\scshape\bfseries
  #2
  \par
  \endgroup
  \vskip 1ex\nobreak
  \addcontentsline{toc}{section}{%
    #1}%
}
\theoremstyle{plain}
\newtheorem{Th}{Theorem}[section]
\newtheorem{Prop}[Th]{Proposition}
\newtheorem{Lem}[Th]{Lemma}
\theoremstyle{definition}
\newtheorem{Def}[Th]{Definition}
\newtheorem{Rem}[Th]{Remark}
\def\noo{\partial}
\def\bf{\textbf}
\def\it{\textit}
\def\te{\textnormal}
\def\leq{\leqslant}
\def\geq{\geqslant}
\def\R{{\mathds R}}
\def\N{{\mathds N}}
\def\C{{\mathcal{C}}}
\def\L{{\mathcal{L}}}
\def\B{\textit{I\!B}}
\def\D{{\mathrm{dom}}\,}
\def\E{{\mathrm{epi}}\,}
\begin{document}
\vspace*{-4mm}
\title{{\bf\sc On nonuniqueness of solutions of Hamilton-Jacobi-Bellman equations}}

\vspace*{-4mm}

\author{\bf{Arkadiusz Misztela \textdagger}}

\vspace*{-4mm}

\thanks{\textdagger\, Institute of Mathematics, University of Szczecin, Wielkopolska 15, 70-451 Szczecin, Poland; e-mail:\\ arkadiusz.misztela@usz.edu.pl\vspace{0cm}}

\begin{abstract}
\noindent An example of a nonunique solution of the Cauchy problem of Hamilton-Jacobi-Bellman (HJB) equation  with surprisingly regular Hamiltonian is presented. The Hamiltonian $H(t,x,p)$ is locally Lipschitz continuous with respect to all variables, convex in $p$ and with linear growth with respect to $p$ and  $x$. The HJB equation possesses two distinct lower semicontinuous solutions with the same final conditions; moreover, one of them is the value function of the corresponding Bolza problem.  The definition of lower semicontinuous solution was proposed by Barron-Jensen~\cite{B-J} and Frankowska~\cite{HF}. Using the example an analysis and comparison of assumptions in some uniqueness results in HJB equations is provided.

\vspace{4mm}\hspace{-1cm}
\noindent  \bf{\scshape Keywords.} Hamilton-Jacobi-Bellman equation, optimal control theory, nonsmooth analysis,\\ \hspace*{-0.55cm} viscosity solution.

\vspace{2mm}\hspace{-1cm}
\noindent \bf{\scshape Mathematics Subject Classification.} 35Q93, 49L25, 49J52.

\end{abstract}

\maketitle

\pagestyle{myheadings}  \markboth{\small{ARKADIUSZ MISZTELA}
}{\small{ON NONUNIQUENESS OF SOLUTIONS}}

\vspace{-7mm}



\section{Introduction}

\noindent The classical Cauchy problem for the  Hamilton-Jacobi-Bellman equation is a partial differential equation with the final condition
\begin{equation}\label{row}
\begin{array}{rll}
-U_{t}+ H(t,x,-U_{x})=0 &\te{in} & ]0,T[\,\times\,\R^n, \\[0mm]
U(T,x)=g(x) & \te{in} & \R^n.
\end{array}
\tag*{HJB}
\end{equation}

\noindent If the Hamiltonian $H$ is convex in the gradient variable, then there are relations between solutions of \ref{row} and calculus of variation problems involving a function dual to $H$. This function, called the Lagrangian and denoted by $L$,  is the Legendre-Fenchel transform of  $H$ in its gradient variable:
 \begin{equation}\label{tr1}
 L(t,x,v)=\sup_{p\in\R^{n}}\,\{\,\langle v,p\rangle-H(t,x,p)\,\}.
\end{equation}
The value function of a  calculus of variation  problem $V:[0,T]\times\R^n\to\R\cup\{+\infty\}$ is defined by 
\begin{equation}\label{FW-def}
V(t_0,x_0)= \inf_{\begin{array}{c}
\scriptstyle x(\cdot)\,\in\,\mathcal{A}\left([t_0,T],\R^n\right)\\[-1mm]
\scriptstyle x(t_0)=x_0
\end{array}}\,\Big\{\,g(x(T))+\int_{t_0}^TL(t,x(t),\dot{x}(t))\,dt\,\Big\},
\end{equation}
where $\mathcal{A}\!\left([t_0,T],\R^n\right)$ denotes the space of all absolutely continuous functions from $[t_0,T]$ into $\R^n$.
 If the value function is  differentiable, it is well-known that it satisfies \ref{row} in the classical sense. However, in many situations the value function is not differentiable. Then the solution of the HJB equation must be defined in nonsmooth sense in such a way that under quite general assumptions on $H$ and $g$, $V$ is the unique solution of HJB. Since we use the nonsmooth analysis we need a notion of a subgradient.  For a vector $v\in\R^n$ and a function $f:\R^n\rightarrow\R\cup\{+\infty\}$,  $v$ is a subgradient of $f$ at $x\in\D f$, written $v\in\noo f(x)$, if
\begin{equation}\label{subgra}
\liminf\limits_{y\rightarrow x}\frac{f(y)-f(x)-\langle v,y-x\rangle}{|y-x|}\geqslant 0.
\end{equation}

In 1990 Baron-Jensen \cite{B-J} and Frankowska \cite{HF} introduced extended viscosity solutions to semicontinuous functions for Hamiltonian that is convex in the gradient variable and provided a uniqueness result. Frankowska \cite{HF}  called these solutions \it{lower semicontinuous solutions}.

\begin{Def}\label{lsc-solutions}  A function $U:[0,T]\times\R^n\rightarrow\R\cup\{+\infty\}$ is lower semicontinuous solution of the \ref{row} equation if it satisfies the following:
\vspace{-0.8mm}
\begin{enumerate}[leftmargin=7.5mm]
\item[\te{\bf{(i)}}] $U$ is lower semicontinuous and $U(T,x)=g(x)$ for all $x\in\R^n$;
\item[\te{\bf{(ii)}}] for any $(t,x)\in \D U$, for all $(p_{t},p_{x})\in\noo U(t,x)$, one has 
\begin{equation}\label{lsc-sol}
\left\{\begin{array}{lll}
-p_{t}+H(t,x,-p_{x})\geq 0 & \te{if} & 0\leq t<T, \\[1mm]
-p_{t}+H(t,x,-p_{x})\leq 0 & \te{if} & 0<t\leq T.
\end{array}\right.
\end{equation}
\end{enumerate}
\end{Def}

The main goal of the  paper is to present an example of two distinct lower semicontinuous solutions of the  \ref{row} equation with surprisingly regular Hamiltonian.  Understanding the role that the Lipschitz-type condition plays in theorems about uniqueness
of solution of \ref{row} is also important. The proposed Hamiltonian $H\!:[0,T]\!\times\!\R\times\!\R\rightarrow\R$ satisfies the \it{classical assumptions} i.e. firstly, it is convex with respect to~$p$, secondly, it increases linearly in $p$ and $x$, i.e. $|H(t,x,p)|\leq 2|p|$ for any $t\in[0,T]$, $x,p\in\R$, thirdly, it is locally Lipschitz continuous, i.e.
\begin{equation}\label{ass-llc}
\begin{array}{l}
\forall\, r>0\;\; \exists\, k>0\;\;\forall\, t,s\in[0,T]\;\; \forall\, x,y\in r\B\;\; \forall\, p,q\in r\B  \\[0mm]
|H(t,x,p)-H(s,y,q)|\leq k(|t-s|+|x-y|+|p-q|),
\end{array}
\tag{LLC}
\end{equation}
where  $\B$ is the closed unit ball. In addition, we  show that one of the indicated lower semicontinuous solutions is the value function given by~(\ref{FW-def}). In general, for the uniqueness of \ref{row} solutions, one needs some stronger Lipschitz-type condition, that we shall study further in connection to the results of uniqueness.

Frankowska~\cite{HF} proved that, the value function is the unique lower semicontinuous solution of the  \ref{row} equation if the Hamiltonian meets the  classical assumptions and it is positively homogeneous in~$p$, i.e.~$\forall _{r>0}\;H(t,x,rp)=rH(t,x,p)$. Actually, the result of Frankowska does not require local Lipschitz continuity with respect to the triple~$(t,x,p)$. It is enough to assume it is satisfied with respect to state variable~$x$ only. The example of nonuniqueness of solution of~\ref{row} introduced in the current paper, does not contradict the result of Frankowska as the Hamiltonian in our example, fulfills the classical assumptions, but it is not positively homogeneous in~$p$.

Earlier Ishii \cite[Thm. 2.5]{HI} and  Crandall-Lions \cite[Thm. VI.1]{C-L-87} had proved the uniqueness of viscosity solutions of \ref{row} in the class of continuous functions. They had assumed instead of linear growth in~$p$ and $x$ of Hamiltonian, the following condition
\begin{equation}\label{ass-000}
\begin{array}{l}
\exists\, C>0\;\;\forall\, t\in[0,T]\;\; \forall\, x\in\R^n\;\;\forall\, p,q\in\R^n  \\[0mm]
|H(t,x,p)-H(t,x,q)|\;\leq\; C(1+|x|)|p-q|.
\end{array}
\end{equation}
One can show that if the Hamiltonian is convex in~$p$ and possesses a linear growth of the form $H(t,x,p)\leq 2(1+|x|)|p|$, then condition (\ref{ass-000}) holds with constant~$C=2$. Therefore the Hamiltonian from our  example of nonuniqueness also satisfies~(\ref{ass-000}). Next, the results  from papers\linebreak \cite[Thm. 2.5]{HI} and  \cite[Thm. VI.1]{C-L-87} require the Lipschitz-type condition for the Hamiltonian:
\begin{equation}\label{ass-hc}
\begin{array}{l}
\forall\, r>0\;\; \exists\, k>0\;\;\forall\, t,s\in[0,T]\;\; \forall\, x,y\in r\B\;\; \forall\, p\in\R^n  \\[0mm]
|H(t,x,p)-H(s,y,p)|\leq k(1+|p|)(|t-s|+|x-y|),
\end{array}
\tag{SLC}
\end{equation}
that is derived from the optimal control problem. The meaning of~(\ref{ass-hc}) in the optimal control problems is discussed in the following papers \cite{F-S,AM3,AM4,FR}. The results in \cite[Thm. 2.5]{HI} and  \cite[Thm. VI.1]{C-L-87} do not require the convexity of the Hamiltonian in~$p$, but the uniqueness of  \ref{row} solutions is obtained in the class of continuous functions. Using these results, Bardi and Capuzzo-Dolcetta \cite[Chap. V, Thm. 5.16]{B-CD} showed the uniqueness of  \ref{row} solutions in the class of lower semicontinuous functions assuming additionally the convexity of the Hamiltonian in~$p$. Because the Hamiltonian in our example of nonuniqueness satisfies the  classical assumptions, also the condition (\ref{ass-000}) is satisfied. It means that in the uniqueness results, the key point is  (\ref{ass-hc}), at least in the case of lower semicontinuous solutions of the  \ref{row} equation.

In order to understand better the reason for nonuniqueness of the solution of the   \ref{row} equation  given in our example, we need to recall the Loewen-Rockafellar condition from \cite{L-R-94} that is more general version of~(\ref{ass-hc}).  This condition was used
 by Loewen-Rockafellar  \cite{L-R,L-R-97} to study necessary conditions satisfied by optimal solutions of Bolza problem.
Galbraith \cite{G} proved that, the value function is the unique lower semicontinuous solution of~\ref{row} assuming the Loewen-Rockafellar condition \cite{L-R-94}. This uniqueness result has, in general, the same nature as the uniqueness result of Dal Maso-Frankowska \cite{DM-F-V}. The Hamiltonian from our example of nonuniqueness is constructed using the function $\varphi$. 
In Section~\ref{section-rh} we show that if $\varphi$ is replaced by a function that is sufficiently regular, then we obtain the Hamiltonian that does not satisfy the condition~(\ref{ass-hc}), while it satisfies the Loewen-Rockafellar condition~\cite{L-R-94}. Therefore, by virtue of classical results, we are not able to say if after the mentioned change, we get the uniqueness of the solution of~\ref{row} or not. However, using the Galbraith result~\cite{G}, we know that \ref{row} with this Hamiltonian has the unique solution.  Thus, the \ref{row} equation can have the unique solution even if 
the Hamiltonian does not satisfy the condition~(\ref{ass-hc}). Moreover, the Hamiltonian from our example of nonuniqueness shows the difference between
(\ref{ass-llc}), (\ref{ass-hc}) and Loewen-Rockafellar condition~\cite{L-R-94}. 
We know that the condition (\ref{ass-llc}) does not guarantee uniqueness of  the solution to the \ref{row} equation, so the natural question can be stated -- 
what  extra conditions the Hamiltonian should satisfy in order the  \ref{row} equation has
the unique solution? The answer to this question is obtained when analysing conditions
(\ref{ass-llc}), (\ref{ass-hc}) and Loewen-Rockafellar condition~\cite{L-R-94}. 
 Namely, some extra conditions in the interdependence between space and subgradient for large values of the subgradient are mandatory (see Sect. \ref{Exi_uni_thm}).

In the literature, an example of nonuniqueness of the solution of the equation of~\ref{row} is known. Crandall and Lions in their fundamental article~\cite{C-L} give an example of nonuniqueness of viscosity solution of the transport equation 
\vspace{-1mm}
\begin{equation*}
-U_{t}+b(x)\cdot (-U_x)=0,\quad U(T,x)=g(x).
\end{equation*}  

\vspace{-1mm}
\noindent In this example, the function $b(\cdot)$ is bounded and continuous, but is  not locally Lipschitz continuous. Therefore the Hamiltonian~$H(t,x,p)=b(x)\,p$ is convex, continuous and satisfies the condition (\ref{ass-000}), but it does not satisfy (\ref{ass-llc}). The solutions in the Crandall-Lions example are continuous, and in our example, they are lower semicontinuous. Now we consider an easy example of nonuniqueness of lower semicontinuous solutions of the transport equation. 
Let $b(x)=x^2$ and $g\equiv 0$. Then functions $V\equiv 0$ and  $U(t,x)=0$, $(T-t)x\leq 1$, $U(t,x)=1$, $(T-t)x>1$ are lower semicontinuous solutions of the transport equation. 
In this example, the Hamiltonian $H(t,x,p)=x^2p$ satisfies (\ref{ass-llc}), (\ref{ass-hc}) and Loewen-Rockafellar condition~\cite{L-R-94}, but it does not satisfy (\ref{ass-000}) or equivalently it does not have a linear growth in $x$. 
It means that these conditions play important roles in the uniqueness
of lower semicontinuous solutions of \ref{row}.  
Notice that  Hamiltonians from the above two examples of nonuniqueness do not satisfy the
classical assumptions, but the Hamiltonian from our example of nonuniqueness does.
In the transport equation we cannot find the example of nonuniqueness as ours. Because on one hand the  \ref{row} equation with the  Hamiltonian satisfying the
classical assumptions and positively homogeneous in~$p$ has the unique solution 
(see Frankowska~\cite{HF}). On the other hand conditions (\ref{ass-llc}), (\ref{ass-hc}) and Loewen-Rockafellar condition~\cite{L-R-94} are equivalent, 
if the Hamiltonian is  convex in $p$, positively homogeneous in~$p$ and continuous. 
Therefore, we cannot find differences between them while considering Hamiltonians
from the transport equation.

Summarizing, the above examples of nonuniqueness show that the key role in the uniqueness of viscosity solutions of~\ref{row} is played by the Lipschitz-type conditions originated in optimization instead of local Lipschitz continuity originated in  differential equations theory. Therefore our result is proper for optimization problems.

We have found the example of nonuniqueness presented here while generalizing a
result of Plaskacz-Quincampoix \cite{P-Q}. This generalization was published in \cite{AM2}.



\section{Existence and uniqueness theorem}\label{Exi_uni_thm}
\noindent In this section we present the well-known theorem on existence and uniqueness of lower 
semicontinuous solutions of the \ref{row} equation. We discuss results of the paper on the basis of this theorem. First, we introduce basic assumptions on Hamiltonian:

\begin{enumerate}[leftmargin=9mm]
\item[\te{(H1)}] $H:[0,T]\times\R^{n}\times\R^{n}\rightarrow\R$  is continuous;
\item[\te{(H2)}] $H(t,x,p)$ is convex with respect to $p$ for every $(t,x)\in[0,T]\times\R^n$;
\item[\te{(H3)}] there exists a constant $C>0$ such that for all
$(t,x,p)\in[0,T]\times\R^n\times\R^n$  the following\\ inequality is satisfied
$H(t,x,p)\leqslant C(1+|x|)|p|$.
\end{enumerate}

\noindent Condition (H3) is called  \it{a linear growth in $p$ and $x$ of the Hamiltonian}. 
Basing on (H1)--(H3) we can state the following theorem on existence and uniqueness:

\begin{Th}\label{lfm-Th100}
We suppose that $H$ satisfies \te{(H1)--(H3)} and  \te{(\ref{ass-hc})}. Let $g$
be lsc and bounded from below, and $V$ be the value function associated with  $g$ and $L$, where $L$ is given by \te{(\ref{tr1})}. Then the value function $V$ is bounded from below lower semicontinuous  solution of \te{\ref{row}}. Moreover,
if $U$ is  bounded from below lower semicontinuous  solution of \te{\ref{row}}, then $U=V$ on $[0,T]\times\R^n$.
\end{Th}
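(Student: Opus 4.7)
The plan is to prove the two inequalities $V\leq U$ and $U\leq V$ separately, via the standard monotonicity characterization of lsc solutions of~\ref{row}: control, along each absolutely continuous trajectory $x(\cdot)\in\mathcal{A}([t_0,T],\R^n)$ with $x(t_0)=x_0$, the cost-augmented map
\[ \Phi_{t_0,x}(t) := U(t,x(t))+\int_{t_0}^{t}L(s,x(s),\dot x(s))\,ds. \]
Two one-sided properties need to be extracted from Definition~\ref{lsc-solutions}: (a) a \emph{weak decrease} property --- for every $(t_0,x_0)\in\D U$ with $t_0<T$, there \emph{exists} such a trajectory along which $\Phi_{t_0,x}$ is nonincreasing; and (b) a \emph{strong increase} property --- for every $(t_0,x_0)\in [0,T[\,\times\R^n$ and \emph{every} such trajectory, $\Phi_{t_0,x}$ is nondecreasing. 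The one-sided subgradient inequalities at $t=0$ and $t=T$ in~(\ref{lsc-sol}), together with the terminal condition $U(T,\cdot)=g$ and lower semicontinuity of $U$, handle the boundary slabs by a limiting argument.

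Granting (a) and (b), both inequalities are immediate. From (a), at $(t_0,x_0)\in\D U$ with $t_0<T$ the trajectory $x(\cdot)$ provided yields $U(t_0,x_0)\geq U(T,x(T))+\int_{t_0}^{T}L\,ds = g(x(T))+\int_{t_0}^{T}L\,ds = \Gamma(x(\cdot)) \geq V(t_0,x_0)$, while on the complement of $\D U$ the inequality $V\leq U$ is trivial. From (b), every admissible $x(\cdot)$ with $x(t_0)=x_0$ gives $U(t_0,x_0)\leq U(T,x(T))+\int_{t_0}^{T}L\,ds = \Gamma(x(\cdot))$, and taking the infimum over $x(\cdot)$ yields $U(t_0,x_0)\leq V(t_0,x_0)$. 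The boundary case $t_0=T$ is the terminal condition itself.

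The derivation of (a) uses only (H1)--(H4): a viability argument produces, at each $(t_0,x_0)\in\D U$, a velocity selection from the characteristic cone dual to the Hamiltonian, and the resulting differential inclusion admits a solution on $[t_0,T]$ thanks to the convexity~(H2), upper semicontinuity~(H1), and linear growth~(H3). Property (b), in contrast, is where~(\ref{ass-hc}) enters decisively: mollify $U$ in $(t,x)$, differentiate $U_\varepsilon(t,x(t))+\int_{t_0}^{t}L\,ds$ along an \emph{arbitrary} trajectory, apply the subgradient inequality $-p_t+H(t,x,-p_x)\leq 0$, and pass to $\varepsilon\to 0$. The mollification introduces momenta $p_x$ that may be large in norm, and to absorb them one needs the Lipschitz modulus of $H$ in $(t,x)$ to grow at most linearly in $|p|$ --- which is exactly~(\ref{ass-hc}); a Gronwall-type estimate then delivers the strong increase.

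The main obstacle is (b). The local Lipschitz hypothesis~(\ref{ass-llc}) bounds the modulus of $H$ in $(t,x)$ only uniformly on bounded sets of momenta, which is inadequate for the large momenta forced by the mollification; the sharper $(1+|p|)$ modulus of~(\ref{ass-hc}) is what rescues the estimate. As the rest of the paper will show, this gap cannot be closed by routine means: without~(\ref{ass-hc}), uniqueness of lsc solutions genuinely fails, and the nonuniqueness example constructed below is precisely an obstruction to~(b).
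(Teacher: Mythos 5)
First, a point of comparison: the paper does not actually prove Theorem \ref{lfm-Th100} --- it is quoted as a particular case of Theorem 2.2 of Galbraith \cite{G}. Your decomposition into two monotonicity principles --- a weak-decrease property (a) yielding $V\leq U$ and a strong-increase property (b) yielding $U\leq V$ --- is exactly the skeleton of that cited proof, and your accounting of which hypotheses feed which half (\te{(H1)--(H4)} for the existence of a cost-nonincreasing trajectory, \te{(\ref{ass-hc})} for monotonicity along arbitrary trajectories) is correct. The architecture is right.

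The gap is in the execution of (b), the only place where the theorem's distinctive hypothesis enters, and which you propose to establish by mollifying $U$. For a merely lower semicontinuous $U$ with values in $\R\cup\{+\infty\}$ this tool is not available: convolution is meaningless where $U=+\infty$, and the subgradient inequality $-p_t+H(t,x,-p_x)\leq 0$ at points of $\noo U(t,x)$ does not transfer to the gradients of a mollification $U_\varepsilon$ (that transfer works only under semiconvexity-type hypotheses, or in the continuous theory of Ishii and Crandall--Lions via doubling of variables --- neither of which you may assume here). In the lsc theory, property (b) is obtained instead by a \emph{strong invariance} theorem for the epigraph of $U$ under the differential inclusion $(\dot x,\dot z)\in\E L(t,x,\cdot)$: one shows $\E U$ is invariant, and the Filippov--Gronwall estimate used there requires exactly the local Lipschitz continuity (in the Hausdorff sense) of the multifunction $(t,x)\rightarrow\E L(t,x,\cdot)$, which is the content of condition (\ref{ass-11}), equivalent to \te{(\ref{ass-hc})} under \te{(H1)--(H4)}. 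So you have located the role of \te{(\ref{ass-hc})} correctly, but the mechanism you name to exploit it would not go through; and since both (a) and (b) are asserted rather than derived, the proposal as it stands is a correct plan rather than a proof.
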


\noindent We shall show that Theorem  \ref{lfm-Th100} is a particular case of Theorem 2.2 from   \cite{G}. To this end, we need to discuss the subgradient characterization of the condition  (\ref{ass-hc}) and the Loewen-Rockafellar condition from \cite{L-R-94}. 
A subgradient is defined for the function given on the entire Euclidean space (see Def. \ref{subgra}), so to use a subgradient of the Lagrangian $L$ we extend it in the following way: $L(t,x,v):=L(0,x,v)$ for $t<0$ and  $L(t,x,v):=L(T,x,v)$ for $t>T$.
\begin{equation}\label{ass-11}
\begin{array}{l}
\te{For every}\; r>0\; \te{there exists}\; k>0\; \te{such that at every point}\; (t,x,v)\\ 
\in\R\times r\B\times \R^n,\;  \te{every}\; (w_1,w_2,p)\in\noo L(t,x,v)\; \te{the inequality}\\
|(w_1,w_2)|\leq k(1+|p|)\; \te{holds.}
\end{array}
\end{equation}
Condition (\ref{ass-11}) is equivalent to  (\ref{ass-hc}), if Hamiltonian satisfies  (H1)--(H3). Moreover, one can prove that the condition  (\ref{ass-11}) 
is a subgradient characterization of Lipschitz continuity of the multifunction
$(t,x)\rightarrow\E L(t,x,\cdot)$ in the Hausdorff's sense. Besides, it is easy to see that the condition (\ref{ass-11}) implies the following one: 
\begin{equation}\label{ass-1}
\begin{array}{l}
\te{For every}\; r>0\; \te{there exists}\; k>0\; \te{such that at every point}\; (t,x,v)\\ 
\in\R\times r\B\times \R^n,\;  \te{every}\; (w_1,w_2,p)\in\noo L(t,x,v)\; \te{we have}\\
|(w_1,w_2)|\leq k(1+|v|+|L(t,x,v)|)(1+|p|).
\end{array}
\end{equation}
The condition (\ref{ass-1}) is a subgradient characterization 
of the Aubin type continuity of multifunction $(t,x)\to\E L(t,x,\cdot)$. This kind of Aubin continuity was introduced by Loewen and Rockafellar in  \cite[Def. 2.3, (b)]{L-R-94}. However,
a subgradient characterization can be found in\linebreak  \cite[Prop. 3.4]{G1}. 
Galbraith \cite{G} obtained the uniqueness result  assuming convexity of Hamiltonian\linebreak with respect to $p$, a mild growth  of Hamiltonian with respect to $p$ (see (A1) from \cite{G}) and slightly more general Lipschitz-type condition than (\ref{ass-1}) (see (A2) from \cite{G}).
Thus, if we replace the condition (\ref{ass-hc}) by (\ref{ass-1}) in Theorem \ref{lfm-Th100}, then the claim of Theorem  \ref{lfm-Th100} still holds. 
In  \cite{G,G1,L-R-94,L-R,L-R-97} the authors use a limiting subgradient that is larger than the regular subgradient we use. We know that replacing the regular  subgradient 
by the limiting subgradient one the condition  (\ref{ass-1}) is unchanged if we assume that 
Hamiltonian is continuous.  We also know that the condition (\ref{ass-1}) implies  (\ref{ass-llc}) (see \cite{G}, Prop. 2.4). Therefore, assuming conditions (H1)--(H3) we obtain 
\begin{equation}\label{imp-101}
\te{(\ref{ass-hc})}\;\; \Longrightarrow\;\;\te{(\ref{ass-1})}\;\; \Longrightarrow\;\; \te{(\ref{ass-llc})}.
\end{equation}
Further on, we will show that the implications  (\ref{imp-101}) cannot be reversed. 
Moreover, we prove that the Lipschitz continuity  (\ref{ass-llc}) raising from 
differential equations is not sufficient for the uniqueness of lower semicontinuous solution
of \ref{row}. However, conditions (\ref{ass-hc}) and (\ref{ass-1}) 
of optimization problems are sufficient for uniqueness.


\section{Example of  Hamiltonian}\label{section-ph}

\noindent In this section we define a Hamiltonian  and discuss its regularity. 
In the next section we show that the \ref{row} equality with this Hamiltonian
does not have the unique lower semicontinuous solution.
We define an auxiliary function
$\varphi:[0,T]\times\R\rightarrow\R$  by the formula 
\begin{equation}\label{pf}
\varphi(t,x)=\sqrt{|t-x|}\,\exp\left(2\sqrt{|t-x|}\right).
\end{equation}

The Hamiltonian $H:[0,T]\times\R\times\R\rightarrow\R$ is defined by the formula
\begin{equation}\label{ex-H}
H(t,x,p)=\left\{
\begin{array}{ccl}
\:0 & \;\it{if}\;\; & 2|p|\leq\frac{1}{\varphi(t,x)},\;t \not=x, \\[1mm]
\:2|p|-\frac{1}{\varphi(t,x)} & \;\it{if}\;\; & 2|p|>\frac{1}{\varphi(t,x)},\;t \not=x, \\[1mm]
\:0 & \;\it{if}\;\; & p\in\R,\; t=x.
\end{array}
\right.
\end{equation}

\noindent It is not difficult to see that the Hamiltonian $H$ given by (\ref{ex-H})
is continuous on $[0,T]\times\R\times\R$, convex with respect to  $p$ for each  $(t,x)\in[0,T]\times\R$ and has a linear growth in $p$ and $x$, i.e. $|H(t,x,p)|\leq 2|p|$ for all $t\in[0,T]$, $x,p\in\R$, so it
satisfies  (H1)--(H3).

\begin{Th}\label{stw-hc}
The Hamiltonian $H$ given by \te{(\ref{ex-H})} satisfies locally the Lipschitz continuity, i.e. for each  $(t_0,x_0,p_0)\in[0,T]\times\R\times\R$ there exist numbers  $r,k>0$ such that
\begin{equation}\label{lwl}
\begin{array}{l}
\forall\, t,s\in\B(t_0,r)\;\; \forall\, x,y\in\B(x_0,r)\;\; \forall\, p,q\in\B(p_0,r)\\
|H(t,x,p)-H(s,y,q)|\leq k(|t-s|+|x-y|+|p-q|).
\end{array}
\end{equation}
\end{Th}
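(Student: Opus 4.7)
The plan is to first rewrite $H$ in the unified form
\[
H(t,x,p)=\max\!\left\{0,\,2|p|-\frac{1}{\varphi(t,x)}\right\},
\]
with the convention $1/\varphi(t,x)=+\infty$ when $t=x$, which is consistent with the third line of (\ref{ex-H}). With this representation in hand, I would split the argument into two cases according to whether the base point $(t_0,x_0,p_0)$ lies on the diagonal $\{t=x\}$ or not.

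In the off-diagonal case $t_0\neq x_0$, set $\delta=|t_0-x_0|>0$ and pick any $r\in(0,\delta/4)$. The triangle inequality forces $|t-x|\geq \delta/2$ for all $(t,x)\in\B(t_0,r)\times\B(x_0,r)$, so on this neighbourhood $\varphi$ is a smooth function of $|t-x|$ bounded below by $\sqrt{\delta/2}\,\exp(2\sqrt{\delta/2})>0$. Consequently $1/\varphi$ is of class $C^1$ and in particular Lipschitz in $(t,x)$. Combined with the globally Lipschitz maps $p\mapsto 2|p|$ (constant $2$) and $s\mapsto\max\{0,s\}$ (non-expansive), this yields (\ref{lwl}) at $(t_0,x_0,p_0)$ by the standard composition rule for Lipschitz functions.

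The main obstacle is the diagonal case $t_0=x_0$, where $\varphi(t_0,x_0)=0$ and $1/\varphi$ blows up. The strategy is to turn this blow-up to our advantage: for $p$ in a bounded range the condition $2|p|\leq 1/\varphi(t,x)$ holds automatically close enough to the diagonal, forcing $H$ to vanish identically there. Quantitatively, set $M=|p_0|+r$; for $(t,x,p)$ in the product of closed balls of radius $r$ around $(t_0,x_0,p_0)$ one has $|t-x|\leq 2r$ and hence $\varphi(t,x)\leq \sqrt{2r}\,\exp(2\sqrt{2r})$. I would choose $r>0$ small enough that $2M\sqrt{2r}\,\exp(2\sqrt{2r})\leq 1$, which is possible because the left-hand side tends to $0$ as $r\to 0^{+}$. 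For such $r$, every point of the ball satisfies $2|p|\leq 2M\leq 1/\varphi(t,x)$, so by (\ref{ex-H}) we have $H\equiv 0$ on the ball and (\ref{lwl}) holds trivially. This is in agreement with Remark~\ref{rem-nslwl}: the blow-up of $\varphi$ at the diagonal, although fatal to the local Lipschitzness of $\varphi$ itself, actually helps the Lipschitzness of $H$, since the $\max$ with $0$ annihilates the singular term on a whole neighbourhood of the diagonal.
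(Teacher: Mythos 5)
Your proof is correct and follows essentially the same strategy as the paper: near the diagonal $t_0=x_0$ one shrinks $r$ so that $2|p|\leq 1/\varphi(t,x)$ throughout the neighbourhood and $H$ vanishes identically, while off the diagonal $|t-x|$ is bounded away from zero so $1/\varphi$ is Lipschitz and $H$ inherits the Lipschitz bound. The only (harmless) difference is that you write $H=\max\{0,\,2|p|-1/\varphi(t,x)\}$ and invoke the non-expansiveness of $\max\{0,\cdot\}$, which collapses the paper's three subcases \textbf{(a)}--\textbf{(c)} into one line.
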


\begin{proof} For $\theta\geq 0$ we define an auxiliary function  $f$ by the formula $f(\theta)=\sqrt{\theta}\exp(2\sqrt{\theta})$. We notice that the function  $f$ is
increasing and $f(|t-x|)=\varphi(t,x)$ for each $t\in[0,T]$, $x\in\R$. Furthermore, the function  $f$ on $[a, b]$
satisfies Lipschitz continuity if $0<a<b$. We fix $t_0\in[0,T]$, $x_0\in\R$ and $p_0\in\R$, and consider two cases.

\bf{Case 1.} Let $p_0\in\R$ and $t_0=x_0$. We define $r:=1/[2\exp(4)(1+2|p_0|)^2]$ and $k:=1$. We notice that  $|p|\leq r+|p_0|$ for each $p\in\B(p_0,r)$ and $|t-x|\leq 2r$ for each $t\in\B(t_0,r)$, $x\in\B(x_0,r)$, with $r\leq 1/2$. Therefore, for $t\not=x$
we obtain\vspace{1mm}
$$2|p|\leq 2(r+|p_0|)\leq 1+2|p_0|=\frac{1}{\sqrt{2r}\exp(2)}\leq\frac{1}{f(2r)}\leq \frac{1}{f(|t-x|)}
=\frac{1}{\varphi(t,x)}.$$\vspace{0.1mm}

\noindent By the definition of the Hamiltonian  $H(t,x,p)=0$ if  $p\in\R$, $t=x$ and $2|p|\leq 1/\varphi(t,x)$, $t\not=x$. Therefore,  $H(t,x,p)=0$ for each $t\in\B(t_0,r)$, $x\in\B(x_0,r)$, $p\in\B(p_0,r)$, so we have  (\ref{lwl}).

\bf{Case 2.} Let $p_0\in\R$ and $t_0\not=x_0$. We define $r$ by the formula $r:=|t_0-x_0|/3$, then $r\leq|t-x|\leq 5r$ for each  $t\in\B(t_0,r)$, $x\in\B(x_0,r)$. Let  $l$ be the Lipschitz
constant of the function $f$ on $[r,5r]$. We define $k$ by $k:=2+l/f^2(r)$.

Since $t\not= x$ for each  $t\in\B(t_0,r)$, $x\in\B(x_0,r)$, then by the definition of the Hamiltonian (\ref{ex-H}) we obtain the following relations:

\begin{enumerate}
\item[\bf{(a)}] For $2|p|\leq 1/\varphi(t,x)$ and $2|q|\leq 1/\varphi(s,y)$ we have $\te{LS}(\ref{lwl})= 0 \leq \te{RS}(\ref{lwl})$.
\item[\bf{(b)}] For $2|p|\geq 1/\varphi(t,x)$ and $2|q|\geq 1/\varphi(s,y)$
we have inequalities
\begin{eqnarray*}
\te{LS}(\ref{lwl})
&\leq& 2|p-q|+\frac{1}{f(|t-x|)f(|s-y|)}|f(|t-x|)-f(|s-y|)|\\
&\leq& 2|p-q|+\frac{l}{f^2(r)}\left(|t-s|+|x-y|\right)\;\;\leq\;\; \te{RS}(\ref{lwl}).
\end{eqnarray*}
\item[\bf{(c)}] For  $2|p|\geq 1/\varphi(t,x)$ and $2|q|\leq 1/\varphi(s,y)$
we have inequalities
\begin{equation*}
\te{LS}(\ref{lwl}) \;\leq\; 2|p|-\frac{1}{\varphi(t,x)}+\frac{1}{\varphi(s,y)}-2|q|\;\leq\; \te{RS}(\ref{lwl}).
\end{equation*}
\end{enumerate}
The consequence of cases \bf{(a)-(c)} is the inequality (\ref{lwl}).
\end{proof}

The Lagrangian $\!L\!:\![0,T]\!\times\!\R\!\times\!\R\rightarrow\R\cup\{\!+\!\infty\}\!$ given by the formula \eqref{tr1} has the following form:
\begin{equation}\label{ex-L}
L(t,x,v)=\left\{
\begin{array}{ccl}
+\infty & \;\it{if}\;\; & |v|> 2,\;t\not=x, \\[1.5mm]
\frac{|v|}{2\varphi(t,x)} & \;\it{if}\;\; & |v|\leq 2,\;t\not=x, \\[2mm]
0 & \;\it{if}\;\; & v=0,\; t=x,\\[0.5mm]
+\infty & \;\it{if}\;\; & v\not=0,\;t=x.
\end{array}
\right.
\end{equation}
Now we prove that Lagrangian \te{(\ref{ex-L})} does not satisfy the condition (\ref{ass-1}).
The proof of this fact shows how condition (\ref{ass-1}) is violated for large values of gradients. It implies that the second implication in (\ref{imp-101}) cannot be reversed.
 In Section \ref{section-rh} we will see that large values of gradients not necessarily violate condition (\ref{ass-1}), if $\varphi$ is sufficiently regular.

\begin{Prop}
Lagrangian \te{(\ref{ex-L})} does not satisfy the condition  \te{(\ref{ass-1})}.
\end{Prop}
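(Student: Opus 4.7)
The plan is to falsify \te{(\ref{ass-1})} at scale $r=1$ by exhibiting points in $[0,T]\times\B\times\R$ where the ratio of the two sides of the inequality becomes arbitrarily large. I take a sequence $(t_n,x_n,v_n)$ with $v_n\equiv 1$ and $t_n=1/n$, $x_n=0$, so that each point sits in a bounded set and lies strictly inside the effective domain of $L$ (the constraint $|v|\leq 2$ is slack and $t_n\neq x_n$). On a small neighbourhood of each such point, $L$ agrees with the $C^1$ function $(t,x,v)\mapsto v/(2\varphi(t,x))$, hence $\noo L(t_n,x_n,v_n)=\{\gr L(t_n,x_n,v_n)\}$ is a singleton and \te{(\ref{ass-1})} may be tested on the ordinary gradient.

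Using the auxiliary function $f(\theta)=\sqrt{\theta}\exp(2\sqrt{\theta})$ from the proof of Theorem \ref{stw-hc}, one has $\varphi(t,x)=f(t-x)$ on the region $t>x$ together with $\noo_t\varphi=-\noo_x\varphi=f'(t-x)$. A direct computation gives
\[
w_1^{(n)}=-\frac{f'(\theta_n)}{2\,f(\theta_n)^2},\qquad w_2^{(n)}=-w_1^{(n)},\qquad p^{(n)}=\frac{1}{2\,f(\theta_n)},
\]
where $\theta_n=t_n-x_n=1/n$. Since $f(\theta)\sim\sqrt{\theta}$ and $f'(\theta)\sim 1/(2\sqrt{\theta})$ as $\theta\to 0^+$, the left-hand side of \te{(\ref{ass-1})} is of order $|(w_1^{(n)},w_2^{(n)})|\sim\theta_n^{-3/2}$, while $L(t_n,x_n,1)=1/(2f(\theta_n))\sim\tfrac{1}{2}\theta_n^{-1/2}$ and $1+|p^{(n)}|\sim\tfrac{1}{2}\theta_n^{-1/2}$, so the right-hand side is at most of order $\theta_n^{-1}$. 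Dividing, the ratio behaves like $\theta_n^{-1/2}\to\infty$, ruling out any uniform $k>0$ in \te{(\ref{ass-1})}.

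The main obstacle is purely bookkeeping: one needs the correct asymptotic orders of $f$, $f'$, and $1/f$ near zero, together with the identity $\noo_x\varphi=-\noo_t\varphi$ on $\{t>x\}$ to see that $(w_1^{(n)},w_2^{(n)})$ has full magnitude. There is no conceptual difficulty, since we operate at interior points of $\D L$ where regular subgradients coincide with gradients, and the failure of \te{(\ref{ass-1})} reduces to a one-variable growth comparison between $f^{-2}f'$ and $f^{-2}$. Note also that this witness sequence is consistent with Theorem \ref{stw-hc}: here $p^{(n)}$ itself blows up, so no fixed bounded set in the $p$-variable can accommodate it, exactly the gap between \te{(\ref{ass-llc})} and \te{(\ref{ass-1})} advertised in the preceding discussion.
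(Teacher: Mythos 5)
Your argument is correct and follows essentially the same route as the paper: both proofs evaluate the (singleton) subgradient of $L$ at points with $t>x$ approaching the diagonal and show that $|(w_1,w_2)|$, which blows up like $(t-x)^{-3/2}$ through the factor $\partial_t\varphi/\varphi^2$, outpaces the right-hand side of (\ref{ass-1}). The only difference is the witness sequence in the $v$-variable (you fix $v_n\equiv 1$ and compare asymptotic orders, while the paper takes $v_n=2\varphi(t_n,x_n)$ so that $L(t_n,x_n,v_n)\equiv 1$ and the contradiction appears after a single normalization), which does not change the substance of the proof.
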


\begin{proof} Let the Lagrangian $L$
be given by the formula (\ref{ex-L}).
 Then for $t>x$, $v\in\,]0,2[$ and $(w_1,w_2,p)\in\noo L(t,x,v)$ we have
\begin{equation*}
p=\frac{1}{2\varphi(t,x)}, \qquad
-w_1=w_2=\frac{v}{2\varphi(t,x)}\left[\frac{1}{2(t-x)}+\frac{1}{\sqrt{(t-x)}}\right].
\end{equation*}
Therefore, the left and right hand sides of the inequality (\ref{ass-1}) are given by
\begin{equation*}
\te{LS}(\ref{ass-1})=\sqrt{2}\,w_2, \qquad \te{RS}(\ref{ass-1})=k\left(1+v+\frac{v}{2\varphi(t,x)}\right)
\left(1+\frac{1}{2\varphi(t,x)}\right).
\end{equation*}
Let $t_n-x_n\rightarrow 0+$ and $v_n=2\varphi(t_n,x_n)$. If the inequality (\ref{ass-1})
is satisfied, then for large $n\in\N$ 
\begin{eqnarray*}
\sqrt{2}\left[\frac{1}{2(t_n-x_n)}+\frac{1}{\sqrt{(t_n-x_n)}}\right] &\leq& 2k(1+\varphi(t_n,x_n))
\left(1+\frac{1}{2\varphi(t_n,x_n)}\right).
\end{eqnarray*}
Multiplying the above inequality by $2(t_n-x_n)$ we have the inequality
\begin{eqnarray*}
\sqrt{2}+2\sqrt{2(t_n-x_n)} &\leq&  2k\left[1+\varphi(t_n,x_n)\right]\left(2(t_n-x_n)+
\frac{\sqrt{t_n-x_n}}{\exp\left(2\sqrt{t_n-x_n}\,\right)}\right).
\end{eqnarray*}
Passing to the limit, we obtain a contradiction.
\end{proof}


\section{An example of nonuniqueness}
\noindent In this section we present two different, bounded, lower semicontinuous
solutions of the  \ref{row} equation with the Hamiltonian $H$ given by (\ref{ex-H}) and
the terminal condition  $g:\R\rightarrow\R$ given by 
\begin{equation}\label{wpo}
g(x)=\left\{
\begin{array}{ccl}
\:\exp\left(-2\sqrt{x-T}\:\right)-1 & \;\it{if}\;\; & x\geq T, \\[1mm]
\:1 & \;\it{if}\;\; &  x<T.
\end{array}
\right.
\end{equation}
Now we introduce the lemma that is needed in the proof of the inequality
 (\ref{lsc-sol}).
For the function  $f:\R^n\rightarrow\R\cup\{+\infty\}$ and $z\in\D f$ we define
\begin{equation}\label{pwk}
\Delta f(z)(e)=\lim_{\;\;\tau\rightarrow 0+}\frac{f(z+\tau e)-f(z)}{\tau}.
\end{equation}

\begin{Lem}[\te{\cite[Prop. 6.4.8]{A-F}}]
Let the function $F:\R^2\rightarrow\R\cup\{+\infty\}$ and a point $(t,x)\in\D F$ be given. Assume that   $\Delta F(t,x)(\upsilon,\omega)$ exists. Then for each $(p_t,p_x)\in\noo F(t,x)$ we have
\begin{equation}\label{wpon}
\Delta F(t,x)(\upsilon,\omega)\;\geq\;p_t\upsilon+p_x\omega.
\end{equation}
\end{Lem}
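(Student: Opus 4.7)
The plan is to unfold the definition of subgradient at the point $(t,x)$ along the specific direction $(\upsilon,\omega)$ and compare it with the definition of $\Delta F(t,x)(\upsilon,\omega)$. Set $z=(t,x)$, $e=(\upsilon,\omega)$, and $v=(p_t,p_x)$. If $e=0$, both sides of \eqref{wpon} are zero and the inequality is trivial, so I assume $e\neq 0$.

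From $v\in\noo F(z)$, the definition \eqref{subgra} gives
\begin{equation*}
\liminf_{y\rightarrow z,\,y\not=z}\frac{F(y)-F(z)-\langle v,y-z\rangle}{|y-z|}\geq 0.
\end{equation*}
Specializing to the sequence $y=z+\tau e$ with $\tau\rightarrow 0+$ (which approaches $z$ since $e\neq 0$), the liminf along this particular approach is still $\geq 0$. Using $|y-z|=\tau|e|$ and $\langle v,y-z\rangle=\tau\langle v,e\rangle$, I obtain
\begin{equation*}
\liminf_{\tau\rightarrow 0+}\frac{1}{|e|}\!\left(\frac{F(z+\tau e)-F(z)}{\tau}-\langle v,e\rangle\right)\geq 0.
\end{equation*}

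Now I invoke the standing hypothesis that $\Delta F(z)(e)$ exists as a genuine limit. Multiplying through by $|e|>0$, the liminf becomes the actual limit of the difference quotient minus the constant $\langle v,e\rangle$, so
\begin{equation*}
\Delta F(z)(e)-\langle v,e\rangle\;\geq\; 0,
\end{equation*}
which is precisely $\Delta F(t,x)(\upsilon,\omega)\geq p_t\upsilon+p_x\omega$.

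There is no real obstacle here: the argument is a direct specialization of the liminf in the subgradient definition to a one-parameter family, followed by a rescaling to match the directional-derivative normalization. The only point that deserves a line of care is the case $e=0$, which must be handled separately because the subgradient definition excludes $y=z$; but in that case both sides vanish trivially.
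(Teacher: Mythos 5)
Your proof is correct. Note that the paper does not prove this lemma at all --- it is imported verbatim from Aubin--Frankowska \cite[Prop.~6.4.8]{A-F} --- so there is no in-paper argument to compare against; your derivation (restricting the $\liminf$ in the subgradient definition \eqref{subgra} to the ray $y=z+\tau e$, $\tau\to 0+$, rescaling by $|e|$, and then using the assumed existence of the limit $\Delta F(z)(e)$ to turn the $\liminf$ into a genuine limit) is exactly the standard proof of this fact, and your separate treatment of $e=0$ is the right precaution since the subgradient definition excludes $y=z$. One small point worth keeping in mind: the argument remains valid when $\Delta F(z)(e)=\pm\infty$ (the inequality is then read in the extended reals, and this is precisely how the lemma is used in Case~4 of Theorem~\ref{stw-roz} to conclude $\noo U(t,x)=\emptyset$), since $\langle v,e\rangle$ is finite and no indeterminate arithmetic arises.
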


\noindent In this paper we define some notions on the whole Euclidean space, that while applied to   $U(t,x)$ given on  $[0,T]\times\R^n$ we extend  $U(t,x)$ by setting $+\infty$ for $(t,x)\notin[0,T]\times\R^n$. In addition to this, we introduce the following notation
$\psi(t,x):=\frac{1}{\varphi(t,x)}$ for $t\not=x$.

\subsection{First solution}
Let the function $U:[0,T]\times\R\rightarrow\R$ be given by the formula
\begin{equation}\label{sol-1}
U(t,x)=\left\{
\begin{array}{ccl}
\:\exp\left(-2\sqrt{x-t}\:\right)-1 & \;\it{if}\; & x\geq t, \\[1mm]
\:1 & \;\it{if}\; &  x<t.
\end{array}
\right.
\end{equation}

\begin{Th}\label{stw-roz}
The function  $U$ given by \te{(\ref{sol-1})} is bounded  lower semicontinuous solution of the  \te{\ref{row}} equation with the Hamiltonian \te{(\ref{ex-H})} and the terminal condition \te{(\ref{wpo})}.
\end{Th}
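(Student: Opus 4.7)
The plan is to verify Definition~\ref{lsc-solutions} by computing $\partial U(t,x)$ in each of the regions into which the plane is cut by the diagonal $\{x = t\}$ and the parabolic boundaries $\{t \in \{0, T\}\}$, and then checking the corresponding HJB relation region by region. First I would dispose of the preliminaries: $U$ ranges in $[-1, 0]$ on $\{x \geq t\}$ and equals $1$ on $\{x < t\}$, so it is bounded; lower semicontinuity holds away from the diagonal by continuity, while on the diagonal $U(t, t) = 0$ and $U$ jumps \emph{upward} to $1$ from the side $\{x < t\}$, so $\liminf U \geq U(t, t)$ there; and comparing the formulas for $U(T, \cdot)$ and $g$ gives the terminal condition immediately.

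For the interior HJB check I would use that, on $\{x > t\}$, $\varphi(t, x) = \sqrt{x - t}\,\exp(2\sqrt{x - t})$, so a direct differentiation of (\ref{sol-1}) gives
\begin{equation*}
U_t(t, x) \;=\; \psi(t, x), \qquad U_x(t, x) \;=\; -\psi(t, x).
\end{equation*}
Consequently $-U_x = 1/\varphi(t, x)$, which lies in the upper branch of (\ref{ex-H}), so $H(t, x, -U_x) = 2\psi - \psi = \psi = U_t$ and HJB holds with equality. On $\{x < t\}$ the function $U$ is identically $1$, hence $\partial U = \{(0, 0)\}$ and $H(t, x, 0) = 0$, so again the equation is satisfied.

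The delicate part is the diagonal $\{x = t\}$. At any interior diagonal point $(t_0, t_0)$ I would invoke the lemma preceding the theorem: in the direction $(\upsilon, \omega) = (1, 2)$ the ray $(t_0 + \tau, t_0 + 2\tau)$ enters the region $\{x > t\}$, and
\begin{equation*}
\Delta U(t_0, t_0)(1, 2) \;=\; \lim_{\tau \to 0^+} \frac{\exp(-2\sqrt{\tau}) - 1}{\tau} \;=\; -\infty.
\end{equation*}
By (\ref{wpon}) every $(p_t, p_x) \in \partial U(t_0, t_0)$ would then satisfy $p_t + 2 p_x \leq -\infty$, which is impossible; thus $\partial U(t_0, t_0) = \emptyset$ and the condition in Definition~\ref{lsc-solutions} is vacuous along the diagonal. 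The two corners $(0, 0)$ and $(T, T)$ are handled by the same device with admissible directions $(1, 2)$ and $(-1, 1)$ respectively, each of which still enters $\{x > t\}$.

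The remaining task is the off-diagonal boundary, where $\partial U$ is one-sided in $t$. At $(T, x)$ with $x > T$ the smoothness of $U$ from the side $s \leq T$ gives, after a first-order expansion,
\begin{equation*}
\partial U(T, x) \;=\; \{\,(p_t, U_x(T, x)) : p_t \geq U_t(T, x)\,\},
\end{equation*}
so the boundary inequality $-p_t + H(T, x, -U_x) \leq 0$ becomes $p_t \geq U_t(T, x) = H(T, x, -U_x)$, which is precisely the defining constraint. The symmetric analysis at $(0, x)$ with $x > 0$, and the flat cases $(0, x)$ with $x < 0$, $(T, x)$ with $x < T$, where $\partial U$ has the form $\{(p_t, 0) : \pm p_t \leq 0\}$, are entirely analogous. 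I expect the main obstacle to be the clean handling of the diagonal: the function $\varphi$ is calibrated so that $1/\varphi$ vanishes along $\{x = t\}$ at precisely the rate needed for the infinite one-sided directional derivative of $U$ to empty out $\partial U$, which is exactly the mechanism that allows the discontinuity of $U$ to coexist with the lower semicontinuous solution property.
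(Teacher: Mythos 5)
Your proposal is correct and follows essentially the same route as the paper: a region-by-region verification in which $U$ is smooth off the diagonal, the diagonal has empty subgradient because a one-sided directional derivative into $\{x>t\}$ equals $-\infty$ (the paper uses the direction $(0,1)$ where you use $(1,2)$ and $(-1,1)$), and the temporal boundary cases reduce to the one-sided constraint $p_t\gtrless U_t$ with $p_x=U_x$, which the paper extracts from the same directional-derivative lemma rather than from the explicit normal-cone description of $\partial U$. These are only cosmetic variations on the paper's seven-case argument.
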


\begin{proof}
It is not difficult to notice that the function $U$ is lower semicontinuous,
bounded  and $U(T,x)=g(x)$. We will prove that the function $U$ satisfies conditions (\ref{lsc-sol}). To this end, we consider five cases.

\bf{Case 1.} Let $x>t$ and $t\in [0,T[$. Then the equality $\Delta U(t,x)(1,0)=\psi(t,x)$ holds. Therefore, from the inequality (\ref{wpon}) we obtain $\psi(t,x)\geq p_t$ for each  $(p_t,p_x)\in\noo U(t,x)$. Moreover, $\Delta U(t,x)(0,1)=-\psi(t,x)$ and $\Delta U(t,x)(0,-1)=\psi(t,x)$. Therefore, from the inequality (\ref{wpon}) we have
 $-\psi(t,x)\geq p_x$ and $\psi(t,x)\geq -p_x$ for each $(p_t,p_x)\in\noo U(t,x)$.   Since
$-p_t\geq -\psi(t,x)$ and $-p_x =\psi(t,x)$ for each $(p_t,p_x)\in\noo U(t,x)$,
then by the definition of $H$ we have \vspace{0.2cm}
\begin{eqnarray*}
-p_t+H(t,x,-p_x) &=& -p_t -2p_x- \psi(t,x)\\
&\geq& -\psi(t,x) +2\psi(t,x)- \psi(t,x)\;\;=\;\;0.
\end{eqnarray*}

\vspace{2mm}

\bf{Case 2.} Let $x>t$ and $t\in\,]0,T]$. Then the equality $\Delta U(t,x)(-1,0)=-\psi(t,x)$
holds. Therefore, from the inequality (\ref{wpon}) we have $-\psi(t,x)\geq -p_t$
for each $(p_t,p_x)\in\noo U(t,x)$. Moreover, $\Delta U(t,x)(0,1)=-\psi(t,x)$ and $\Delta U(t,x)(0,-1)=\psi(t,x)$. Therefore, from the inequality (\ref{wpon}) we get $-\psi(t,x)\geq p_x$ and $\psi(t,x)\geq -p_x$ for each $(p_t,p_x)\in\noo U(t,x)$.  Since $-p_t\leq -\psi(t,x)$ and $-p_x =\psi(t,x)$ for each $(p_t,p_x)\in\noo U(t,x)$, then by the definition of $H$ we have \vspace{0.2cm}
\begin{eqnarray*}
-p_t+H(t,x,-p_x) &=& -p_t -2p_x- \psi(t,x)\\
&\leq& -\psi(t,x) +2\psi(t,x)- \psi(t,x)\;\;=\;\;0.
\end{eqnarray*}

\bf{Case 3.} Let $x=t$ and $t\in[0,T]$. Then the equality $\Delta U(t,x)(0,1)=-\infty$
holds.  If $(p_t,p_x)\in\noo U(t,x)$, then from the inequality (\ref{wpon}) we have the
contradiction $-\infty=\Delta U(t,x)(0,1)\geq p_x\in\R$. Therefore $\noo U(t,x)=\emptyset$.

\vspace{3mm}

\bf{Case 4.} Let $x<t$ and $t\in[0,T[$. Then the equality $\Delta U(t,x)(1,0)=0$ holds. Therefore, from the inequality (\ref{wpon}) we obtain $0\geq p_t$ for each  $(p_t,p_x)\in\noo U(t,x)$. Moreover, $\Delta U(t,x)(0,1)=0$ and $\Delta U(t,x)(0,-1)=0$. Therefore, from the inequality (\ref{wpon}) we have
 $0\geq p_x$ and $0\geq -p_x$ for each $(p_t,p_x)\in\noo U(t,x)$.   Since
$-p_t\geq 0$ and $-p_x =0$ for each $(p_t,p_x)\in\noo U(t,x)$,
then by the definition of $H$ we have
\begin{equation*}
-p_t+H(t,x,-p_x) \;\geq\;  0 +H(t,x,0) \;=\; 0.
\end{equation*}

\vspace{3mm}

\bf{Case 5.} Let $x<t$ and $t\in\,]0,T]$. Then the equality $\Delta U(t,x)(-1,0)=0$
holds. Therefore, from the inequality (\ref{wpon}) we have $0\geq -p_t$
for each $(p_t,p_x)\in\noo U(t,x)$. Moreover, $\Delta U(t,x)(0,1)=0$ and $\Delta U(t,x)(0,-1)=0$. Therefore, from the inequality (\ref{wpon}) we get $0\geq p_x$ and $0\geq -p_x$ for each $(p_t,p_x)\in\noo U(t,x)$.  Since $-p_t\leq 0$ and $-p_x =0$ for each $(p_t,p_x)\in\noo U(t,x)$,
then by the definition of $H$ we have 
\begin{equation*}
-p_t+H(t,x,-p_x) \;\leq\;   0 +H(t,x,0)\;=\; 0,
\end{equation*}
that finishes the proof.
\end{proof}

\subsection{Second solution}
Let the function $V:[0,T]\times\R\rightarrow\R$ be given by the formula
\begin{equation}\label{sol-2}
V(t,x)=\left\{
\begin{array}{ccl}
\:U(t,x) & \;\it{if}\;\; & x\geq t, \\[1mm]
\:1-\exp\left(-2\sqrt{t-x}\:\right) & \;\it{if}\;\; & 2t-T\leq x< t, \\[1mm]
\:1 & \;\it{if}\;\; &  x<2t-T.
\end{array}
\right.
\end{equation}

\begin{Th}\label{stw-roz2}
The function $V$ given by \te{(\ref{sol-2})} is bounded  lower semicontinuous solution
of the  \te{\ref{row}} equation with the Hamiltonian \te{(\ref{ex-H})} and the terminal condition \te{(\ref{wpo})}.
\end{Th}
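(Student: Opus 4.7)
The proof would follow the template of Theorem \ref{stw-roz}, now with $[0, T] \times \R$ partitioned into the five pieces dictated by the definition of $V$. I would begin with the easy preliminaries: $V$ is bounded (clearly $V \in [-1, 1]$), satisfies $V(T, x) = g(x)$ (for $x \geq T$ both sides equal $\exp(-2\sqrt{x - T}) - 1$, while for $x < T$ one has $x < 2T - T$, so both sides equal $1$), and is lower semicontinuous --- the only nontrivial check is at the seam $x = 2t - T$, where the limit from the middle piece equals $V(t, 2t - T) = 1 - \exp(-2\sqrt{T - t})$, and the limit from the left piece equals $1 \geq V(t, 2t - T)$.

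For (\ref{lsc-sol}) I would organize by the piece to which $(t, x)$ belongs. The region $\{x \geq t\}$ is immediate: there $V \equiv U$, and Cases 1--4 of the proof of Theorem \ref{stw-roz} transfer verbatim. The region $\{x < 2t - T\}$ is equally easy: $V \equiv 1$ and its gradient vanishes in the interior, giving $-p_t + H = 0$; at $t = 0$ (resp.\ $t = T$) the one-sided directional analysis of Cases 6 and 7 of Theorem \ref{stw-roz} shows $\noo V \subset \{(p_t, 0) : p_t \leq 0\}$ (resp.\ $\{(p_t, 0) : p_t \geq 0\}$), hence $-p_t + H \geq 0$ (resp.\ $-p_t + H \leq 0$). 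The middle stratum $\{2t - T < x < t\}$ is also a direct computation: $V = 1 - \exp(-2\sqrt{t-x})$ is smooth with gradient $(\psi(t, x), -\psi(t, x))$ where $\psi := 1/\varphi$, and since $2|{-p_x}| = 2\psi > \psi$ the definition of $H$ gives $H(t, x, -p_x) = 2\psi - \psi = \psi$ and $-p_t + H = 0$; the subcase $t = 0$ (where $-T < x < 0$) is handled by the one-sided derivatives exactly as in Case 2 of Theorem \ref{stw-roz}.

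The essentially new case is the seam $\{x = 2t - T\}$, where $V$ has a jump (the left limit in $x$ is $1$, while the value is $1 - \exp(-2\sqrt{T-t}) < 1$) and hence is not differentiable. Fix $z = (t_0, 2t_0 - T)$ with $0 < t_0 < T$. A short computation gives $\Delta V(z)(1, 2) = -\psi(z)$, $\Delta V(z)(-1, -2) = \psi(z)$, and $\Delta V(z)(0, 1) = \Delta V(z)(-1, 0) = -\psi(z)$ (the first two perturbations remain on the seam, where $V(t_0 \pm \tau, 2(t_0 \pm \tau) - T) = 1 - \exp(-2\sqrt{T - t_0 \mp \tau})$; the last two enter the middle stratum, where $V$ is smooth). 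Applying the inequality (\ref{wpon}) to each of these four directions pins the subgradient to the set $\{(p_t, p_x) : p_t + 2p_x = -\psi(z),\ p_t \geq \psi(z)\}$; a direct check along any approach in the C-cone $\{w \geq 2u\}$ (where $u = s - t_0$, $w = y - (2t_0 - T)$) confirms the converse inclusion, while approach from the E-cone is automatic because $V \equiv 1$ there. Consequently $-p_x \geq \psi(z) > 0$, so by the formula for $H$ one has $H(z, -p_x) = -2 p_x - \psi(z)$, and therefore $-p_t + H = -(p_t + 2p_x) - \psi(z) = 0$, as required.

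The two corner points remain. At $z = (T, T)$ the seam meets the diagonal and $\Delta V(T, T)(0, 1) = -\infty$, so $\noo V(T, T) = \emptyset$ by (\ref{wpon}) and the terminal inequality is vacuous. At $z = (0, -T)$ the tangential direction $(-1, -2)$ exits the domain and is inadmissible, so the equality $p_t + 2p_x = -\psi(z)$ relaxes to the inequality $p_t + 2p_x \leq -\psi(z)$; the same cancellation then delivers $-p_t + H \geq 0$, which is the required boundary inequality at $t = 0$. The only genuine obstacle is the subgradient analysis on the seam $\{x = 2t - T\}$: one must choose test directions whose span suffices to pin down the linear relation $p_t + 2p_x = -\psi$ that makes the Hamiltonian-term cancel $p_t$, and must correctly identify the one-sided relaxation of this relation at the left corner $(0, -T)$.
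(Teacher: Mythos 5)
Your proposal is correct and follows essentially the same route as the paper: reduce to the strip $2t-T\le x<t$ via Theorem \ref{stw-roz} (since $V=U$ elsewhere), then use the directional derivatives $\Delta V(\pm 1,\pm 2)=\mp\psi$ and $\Delta V(0,1)=-\psi$ together with inequality (\ref{wpon}) to force $p_t+2p_x=-\psi$ and $-p_x\ge\psi$ (relaxed to an inequality at $t=0$), which makes the Hamiltonian term cancel. The paper simply treats the seam and the smooth interior of the strip in one stroke and never needs the exact subgradient set, so your separate interior computation and the converse inclusion on the seam are correct but redundant.
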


\begin{proof}
It is not difficult to notice that the function  $V$ is lower semicontinuous, bounded  and  $V(T,x)=g(x)$. We prove that the function  $V$ satisfies conditions  (\ref{lsc-sol}). Since $V(t,x)=U(t,x)$ for  $x\geq t$ and $x<2t-T$, than by the Theorem \ref{stw-roz} it is sufficient to show that  $V$ satisfies conditions (\ref{lsc-sol}), when $2t-T\leq x< t$. To do it we consider two cases.

\bf{Case 1.} Let $2t-T\leq x<t$ and $t\in\;]0,T[$. Then $\Delta V(t,x)(-1,-2)=\psi(t,x)$ and $\Delta V(t,x)(1,2)=-\psi(t,x)$. Therefore, from the inequality (\ref{wpon}) we have
 $\psi(t,x)\geq-p_t-2p_x$ and $-\psi(t,x)\geq p_t+2p_x$ for each $(p_t,p_x)\in\noo V(t,x)$. Moreover, $\Delta V(t,x)(0,1)=-\psi(t,x)$, so by the inequality (\ref{wpon})
we get $-\psi(t,x)\geq p_x$ for each $(p_t,p_x)\in\noo V(t,x)$. Since $-p_x\geq \psi(t,x)$ and  $\psi(t,x)= -p_t-2p_x$ for all $(p_t,p_x)\in\noo V(t,x)$, then from the definition
of $H$  we have
\vspace{0.2cm}
\begin{eqnarray*}
-p_t+H(t,x,-p_x) &=& -p_t -2p_x- \psi(t,x)\\
&=& \psi(t,x)- \psi(t,x)\;\;=\;\;0.
\end{eqnarray*}

\vspace{0.2cm}
\bf{Case 2.} Let $2t-T\leq x<t$ and $t=0$. Then $\Delta V(t,x)(1,2)=-\psi(t,x)$. Therefore, from the inequality (\ref{wpon}) we have $-\psi(t,x)\geq p_t+2p_x$ for all $(p_t,p_x)\in\noo U(t,x)$. Besides,  $\Delta V(t,x)(0,1)=-\psi(t,x)$, so from the inequality (\ref{wpon}) we have $-\psi(t,x)\geq p_x$ for all $(p_t,p_x)\in\noo V(t,x)$. Since $-p_x\geq \psi(t,x)$
and  $-p_t-2p_x\geq \psi(t,x)$ for all $(p_t,p_x)\in\noo V(t,x)$, then by the definition
of $H$, 
\vspace{0.2cm}
\begin{eqnarray*}
-p_t+H(t,x,-p_x) &=& -p_t -2p_x- \psi(t,x)\\
&\geq& \psi(t,x)- \psi(t,x)\;\;=\;\;0,
\end{eqnarray*}
\noindent that ends the proof.
\end{proof}


\section{The value function}\label{section-VF}

\noindent In this section we show that the function $V$ given by (\ref{sol-2})
is the value function, in the sense of definition \eqref{FW-def},
corresponding to the Lagrangian (\ref{ex-L}) and the terminal condition (\ref{wpo}). 
To this purpose, we use the methods of \cite{AM1}, whose scheme is as follows:
We construct a sequence of Hamiltonians $(H_n)_{n\in\N}$
such that $H_n$ satisfy (H1)--(H3) together with (\ref{ass-hc}) and $H_n\searrow H$. Then, by  \cite[Cor. 3.5]{AM1},
the value functions $V_n$ corresponding to \ref{row} with Hamiltonians $H_n$
and terminal conditions  $g_n=g$ converge to the value function $V$ ($V_n\nearrow V$).
Since Hamiltonians $H_n$ additionally, satisfy the condition   (\ref{ass-hc}), then by Theorem \ref{lfm-Th100}, the
value functions $V_n$ are unique lower semicontinuous
solutions. Therefore, to find the value function  $V$, one needs to find solutions $U_n$ of
equations \ref{row} with Hamiltonians $H_n$ and the terminal condition  $g$ and then take the limit $U_n=V_n\to V$.

\subsection{Approximation of  the Hamiltonian}
Let the function $\sigma:[0,+\infty[\rightarrow\R$ be given by $\sigma(z)=\sqrt{z}$, and
functions $\sigma_n:[0,+\infty[\;\rightarrow\,\R$ by $\sigma_n(z)=\sqrt{z}$ if $ z\geq 1/n$ and $\sigma_n(z)=1/\sqrt{n}$ if $ 0\leq z<1/n$.
We notice that functions $\sigma_n$ satisfy  locally the Lipschitz continuity and $\sigma_n\searrow\sigma$.
Using functions~$\sigma_n$ we define functions  $\varphi_n:[0,T]\times\R\rightarrow\R$ by
\begin{equation*}\label{apro-2}
\varphi_n(t,x)=\sigma_n(|t-x|)\exp[2\sigma_n(|t-x|)].
\end{equation*}
Then functions $\varphi_n$ also satisfy locally the Lipschitz continuity and $\varphi_n\searrow\varphi$.

Hamiltonians $H_n:[0,T]\times\R\times\R\rightarrow\R$ are defined by the formula
\begin{equation}\label{ex-H_n}
H_n(t,x,p)=\left\{
\begin{array}{ccl}
\:0 & \;\it{if}\;\; & 2|p|\leq\frac{1}{\varphi_n(t,x)}, \\[1mm]
\:2|p|-\frac{1}{\varphi_n(t,x)} & \;\it{if}\;\; & 2|p|>\frac{1}{\varphi_n(t,x)}.
\end{array}
\right.
\end{equation}
It is not difficult to notice that Hamiltonians $H_n$ given by (\ref{ex-H_n})
are continuous, convex with respect to $p$  and have linear growth in $p$ and $x$,
so they satisfy conditions (H1)--(H3). Moreover $H_n\searrow H$, because $\varphi_n\searrow\varphi$. Similarly, for the proof of Case 2 in Theorem~\ref{stw-hc} we can prove that Hamiltonians~(\ref{ex-H_n}) satisfy the condition (\ref{ass-hc}).

\subsection{Solutions $\pmb{U_n}$ of \ref{row} with  $\pmb{H_n}$} Let   $U_{n}:[0,T]\times\R\rightarrow\R$ be given by formula:

\begin{enumerate}
\vspace{2mm}
\item[]\hspace{6mm}$\pmb{U_n(t,x)=}$
\vspace{-4mm}
\item[] \hspace{4cm} For $x\geq 2t-T$ we have
\vspace{2mm}
\item[\bf{(a)}] if $|t-x|\leq 1/n$ and $T-2t+x\geq 1/n$, then
\begin{eqnarray*}
\;\;\;\;U_n(t,x)&=&(t-x)\sqrt{n}\exp(-2/\sqrt{n}\,)+(1+1/\sqrt{n}\,)\exp(-2/\sqrt{n}\,)-1,
\end{eqnarray*}
\item[\bf{(b)}] if $|t-x|\leq 1/n$ and $T-2t+x\leq 1/n$, then
\begin{eqnarray*}
\!\!U_n(t,x)&=&\exp(-2\sqrt{T-2t+x}\,)+(T-t)\sqrt{n}\exp(-2/\sqrt{n}\,)-1,
\end{eqnarray*}
\item[\bf{(c)}] if $t-x\geq 1/n$ and $T-2t+x\geq 1/n$, then
\begin{eqnarray*}
\hspace{-6mm}U_n(t,x)&=&2(1+1/\sqrt{n}\,)\exp(-2/\sqrt{n}\,)-\exp(-2\sqrt{t-x}\,)-1,
\end{eqnarray*}
\item[\bf{(d)}] if $t-x\geq 1/n$ and $T-2t+x\leq 1/n$, then
\begin{eqnarray*}
\hspace{-13mm}U_n(t,x)&=&[1+\sqrt{n}(T-2t+x)+1/\sqrt{n}\,]\exp(-2/\sqrt{n}\,)\\
\hspace{-13mm}&+&\exp(-2\sqrt{T-2t+x}\,)-\exp(-2\sqrt{t-x}\,)-1,
\end{eqnarray*}
\item[\bf{(e)}] if $1/n\leq x-t$, then $U_n(t,x)=\exp(-2\sqrt{x-t}\,)-1$.
\vspace{1mm}
\item[] \hspace{4cm} For $x< 2t-T$ we have $U_n(t,x)=1$.
\vspace{0mm}
\end{enumerate}

\begin{Prop}\label{lsc-sol-nnnn}
Let Hamiltonians $H_n$ be given by \te{(\ref{ex-H_n})}, the terminal condition $g$ by \te{(\ref{wpo})},
and the function $V$ by  \te{(\ref{sol-2})}. Then functions $U_n$ given by the above
formula are bounded  lower semicontinuous solutions of equations
 \te{\ref{row}} with Hamiltonians $H_n$ and the terminal condition $g$, moreover $U_n\rightarrow V$.
\end{Prop}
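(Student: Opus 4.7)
The plan is twofold: first, verify by direct case-analysis that each $V_n$ satisfies Definition~\ref{lsc-solutions} with Hamiltonian $H_n$ and terminal condition $g$, following the pattern of Theorems~\ref{stw-roz} and~\ref{stw-roz2}; second, establish $V_n\to V$ by passing to pointwise limits in the explicit formulas.

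\textbf{Preliminaries on $V_n$.} First I would check that $V_n$ is continuous on $[0,T]\times\R$ (hence lsc), bounded, and satisfies $V_n(T,\cdot)=g$. Continuity is obtained by matching the formulas~(a)--(e) and the branch $\{x<2t-T\}$ along the interfaces $|t-x|=1/n$, $T-2t+x=1/n$, $x=t$, $x=2t-T$. Boundedness follows from $0\leq\exp(-2\sqrt{\,\cdot\,})\leq 1$ and from $\sqrt n\,\exp(-2/\sqrt n)\,|t-x|\leq\exp(-2/\sqrt n)\leq 1$ on the plateau $|t-x|\leq 1/n$. The terminal condition $V_n(T,x)=g(x)$ is read off cases~(c) or~(e) when $x\geq T$, and cases~(b) or~(d) evaluated at $t=T$ when $x<T$ (where $T-2t+x=x-T\leq 0$ is replaced by $0$, so $\exp(-2\sqrt 0)=1$ cancels the $-1$ and the linear term in $t$ vanishes).

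\textbf{Verification of~(\ref{lsc-sol}).} On $\{x<2t-T\}$ the function $V_n\equiv 1$, so $\noo V_n=\{(0,0)\}$ and the equation reduces to $H_n(t,x,0)=0$. In the interior of each of the regions~(a)--(e) the function $V_n$ is $C^1$; a direct differentiation of the closed form shows in every case that $-2\partial_xV_n\geq 1/\varphi_n(t,x)$ (with equality on the plateau, where $\varphi_n=(1/\sqrt n)\exp(2/\sqrt n)$), hence $H_n(t,x,-\partial_xV_n)=-2\partial_xV_n-1/\varphi_n$, and the resulting identity $-\partial_tV_n-2\partial_xV_n-1/\varphi_n=0$ is a matter of algebraic verification. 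At the interfaces between adjacent regions, on the singular sets $\{x=t\}$ and $\{x=2t-T\}$, and at the temporal boundaries $t=0,T$, I would follow the argument of Cases~1--7 in Theorem~\ref{stw-roz} and Cases~1--2 in Theorem~\ref{stw-roz2}: compute the contingent derivatives $\Delta V_n(t,x)(\upsilon,\omega)$ in well-chosen directions $(\pm 1,0)$, $(0,\pm 1)$, $(1,2)$, $(-1,-2)$, and apply~(\ref{wpon}) to bound $\noo V_n(t,x)$ from outside, so as to recover the appropriate ``$\geq$'', ``$=$'' or ``$\leq$'' version of~(\ref{lsc-sol}) according to the location of~$t$.

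The main obstacle I anticipate is the behaviour along the interface $|t-x|=1/n$: since $\sigma_n$ is merely Lipschitz (not $C^1$) at the edge of the plateau, $V_n$ has a one-sided kink there, and $\noo V_n$ must be characterized by a careful comparison of the two one-sided gradients inherited from the neighbouring regions. A further subtlety arises on the diagonal $\{x=t\}$: unlike for $U$ and $V$ in Theorems~\ref{stw-roz}--\ref{stw-roz2}, the plateau keeps $\Delta V_n(t,x)(0,\pm 1)$ finite, so $\noo V_n(t,x)$ is generally nonempty and must be computed explicitly, not merely dismissed via the $-\infty$ trick.

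\textbf{Convergence $V_n\to V$.} Fix $(t,x)\in[0,T]\times\R$. If $x<2t-T$, then $V_n(t,x)=1=V(t,x)$ for every~$n$. Otherwise, for $n$ sufficiently large the point lies in exactly one of the open regions~(c), (d) or~(e), while the plateau cases~(a), (b) are confined to the shrinking strip $|t-x|\leq 1/n$ that collapses to the diagonal $\{x=t\}$. Passing to the limit in the corresponding formula and using $\exp(-2/\sqrt n)\to 1$ and $1/\sqrt n\to 0$, one recovers the value of~(\ref{sol-2}) branch by branch; on the diagonal the plateau formulas yield $V_n(t,t)\to 0=V(t,t)$, so the pointwise convergence holds throughout $[0,T]\times\R$.
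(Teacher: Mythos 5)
Your proposal follows essentially the same route as the paper, whose own proof is only a brief sketch asserting exactly these facts: boundedness, $V_n(T,\cdot)=g$, lower semicontinuity, differentiability of $V_n$ on $\{x>2t-T\}$ and $\{x<2t-T\}$ with $\noo V_n(t,x)=\emptyset$ on the line $x=2t-T$, verification of (\ref{lsc-sol}), and pointwise convergence $V_n\to V$. One small correction: the ``one-sided kink'' you anticipate along $|t-x|=1/n$ does not occur --- the one-sided gradients from the plateau and from the adjacent regions coincide (e.g.\ $e^{-2\sqrt{t-x}}/\sqrt{t-x}=\sqrt{n}\,e^{-2/\sqrt{n}}$ at $t-x=1/n$), so $V_n$ is $C^1$ throughout $\{x>2t-T\}$ as the paper asserts, and your prescribed comparison of one-sided gradients would reveal exactly this.
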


\begin{proof}
It is not difficult to notice that functions  $U_n$ are bounded (\,i.e. $-1\leq U_n(\,\cdot\,,\,\cdot\,)\leq 1$\,)
and $U_n(T,x)=g(x)$. We can prove that functions $U_n$ are  lower semicontinuous
on $[0,T]\times\R$. Furthermore, functions $U_n$ are differentiable on  $A=\{(t,x)\in\;]0,T[\times\R: x> 2t-T\}$ and $B=\{(t,x)\in\;]0,T[\times\R: x< 2t-T\}$, moreover $\noo U_n(t,x)=\emptyset$ for $x=2t-T$ and $t\in[0,T]$, in addition  $U_n$ satisfy conditions (\ref{lsc-sol}) with Hamiltonians $H_n$ and $U_n\rightarrow V$.
\end{proof}


\section{Regularity of  Hamiltonian}\label{section-rh}

\noindent Let $\C$ be a family of continuous functions $\varphi:[0,T]\times\R\to[0,+\infty[$ that satisfy the condition $\varphi(t,x)=0\,\Leftrightarrow\, t=x$. Then it is easy to prove that the  Hamiltonian $H$ given by \te{(\ref{ex-H})} with $\varphi\in\C$ is well-defined and satisfies (H1)--(H3). Next, by $\L$ we denote a subfamily of $\C$ that contains locally Lipschitz functions. Notice that $\varphi$ given by the formula (\ref{pf}) belongs to $\C$, but does not belong to $\L$. 
The example of a function, that is contained in $\L,$ is $\varphi(t,x)=|t-x|$.
 
 In this section we prove that the Lagrangian $L$ given by \te{(\ref{ex-L})} with $\varphi\in\L$ satisfies the Loewen-Rockafellar condition (\ref{ass-1}). However, it can be shown easily that its Hamiltonian does not satisfy the Lipschitz-type condition (\ref{ass-hc}). It means that we cannot reverse the first implication in (\ref{imp-101}). Moreover, by the result of Galbraith \cite{G} it follows that \ref{row} equation with $H$ given by \te{(\ref{ex-H})} with $\varphi\in\L$ has the unique solution. 

So, it could be said that the nonuniqueness of \ref{row} is due to a particular choice of $\varphi\in\C$ and that choosing a different finction $\varphi\in\L\subset\C$ one can get the unique solution of \ref{row}.

Let the function $\varphi$ be given by (\ref{pf}) and $w(\,\cdot\,,r)$ be
 modulus of continuity of the $\varphi$ on the set $[0,T]\times r\B$. Then the
following proposition holds.

\begin{Prop}\label{wlpq1}
Let the Lagrangian $L$ be given by  \te{(\ref{ex-L})} with the function $\varphi$ belongs to $\C$. Moreover, let $w(\,\cdot\,,r)$ be modulus of continuity of  $\varphi$ . Then for  every $t,s\in[0,T]$ and $x,y\in r\B$, every $v\in \D L(t,x,\cdot)$ there exists $\nu\in \D L(s,y,\cdot)$  such that
\begin{itemize}
\item[\te{\bf{(i)}}] $|\nu-v|\leqslant 2(1+|v|+|L(t,x,v)|)\,w(|s-t|+|y-x|,r)$;
\item[\te{\bf{(ii)}}] $L(s,y,\nu)\leqslant L(t,x,v)+2(1+|v|+|L(t,x,v)|)\,w(|s-t|+|y-x|,r)$.
\end{itemize}
\end{Prop}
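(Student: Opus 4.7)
The plan is a direct case analysis driven by the shape of the effective domain of $L(\tau,\xi,\cdot)$: off the diagonal $\{\tau=\xi\}$ it equals $[-2,2]$, while on the diagonal it degenerates to $\{0\}$. Thus the construction of $\nu$ is essentially forced when $s=y$ and otherwise free, and I would split according to the four possibilities for whether $t=x$ and whether $s=y$, constructing an explicit $\nu\in\D L(s,y,\cdot)$ in each case.

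Consider first the generic case $t\neq x$ and $s\neq y$, where I would further split according to the sign of $\varphi(s,y)-\varphi(t,x)$. If $\varphi(s,y)\ge \varphi(t,x)$, taking $\nu:=v\in[-2,2]$ makes (i) trivial and reduces (ii) to $L(s,y,v)=|v|/(2\varphi(s,y))\le |v|/(2\varphi(t,x))=L(t,x,v)$. If $\varphi(s,y)<\varphi(t,x)$, I rescale by setting $\nu:=v\,\varphi(s,y)/\varphi(t,x)$; then $|\nu|\le|v|\le 2$ keeps $\nu\in\D L(s,y,\cdot)$, a direct computation gives $L(s,y,\nu)=L(t,x,v)$ so that (ii) holds with equality, and
\[
|\nu-v|\;=\;|v|\,\frac{\varphi(t,x)-\varphi(s,y)}{\varphi(t,x)}\;\le\;\frac{|v|}{\varphi(t,x)}\,w(|s-t|+|y-x|,r)\;=\;2L(t,x,v)\,w
\]
gives (i) after noting $2L(t,x,v)\le 2(1+|v|+L(t,x,v))$.

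The case $t=x$ is easy: then $v=0$ is forced and $\nu:=0\in\D L(s,y,\cdot)$ trivializes both inequalities. The main obstacle is the remaining case $t\neq x$, $s=y$, where the only admissible choice is $\nu:=0$. Inequality (ii) is again immediate from $L(s,y,0)=0\le L(t,x,v)$, but (i) requires bounding $|v|$, which may be as large as $2$, by a quantity proportional to $w$. The key observation is that $\varphi$ vanishes on the diagonal, so $\varphi(s,s)=0$ and the definition of the modulus of continuity of $\varphi$ on $[0,T]\times r\B$ yields
\[
\varphi(t,x)\;=\;|\varphi(t,x)-\varphi(s,s)|\;\le\;w(|s-t|+|y-x|,r).
\]
Hence $|v|\le |v|\,w/\varphi(t,x)=2L(t,x,v)\,w\le 2(1+|v|+L(t,x,v))\,w$, which settles (i) and completes the argument. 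Summarizing, the only nontrivial work is this last estimate, where one exploits precisely the fact that $\varphi$ vanishes along the diagonal in order to convert the modulus of continuity of $\varphi$ into a bound on $\varphi(t,x)$ itself, and then uses $L(t,x,v)=|v|/(2\varphi(t,x))$ to absorb the blow-up of $1/\varphi$ into the tolerance factor $(1+|v|+|L(t,x,v)|)$.
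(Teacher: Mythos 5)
Your proof is correct and follows essentially the same route as the paper: the paper's three cases use exactly your constructions ($\nu=v\,\varphi(s,y)/\varphi(t,x)$ when $\varphi(s,y)\leq\varphi(t,x)$, $\nu=v$ otherwise, and $\nu=0$ on the diagonal $t=x$), with your separate case $t\neq x$, $s=y$ absorbed into the rescaling case, since there $\varphi(s,y)=0$ forces $\nu=0$ and the same bound $|\nu-v|=2L(t,x,v)\,|\varphi(s,y)-\varphi(t,x)|\leq 2L(t,x,v)\,w$. The only difference is presentational, and your explicit treatment of the diagonal target case makes the role of $\varphi$ vanishing there more transparent.
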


\begin{proof}
To prove the proposition we consider 3 cases.

\bf{Case 1.} Let $t\not=x$ and $\varphi(s,y)/\varphi(t,x)\leq 1$. Then for $v\in \D L(t,x,\cdot)$ we put $\nu=v\,\varphi(s,y)/\varphi(t,x)$. We notice that $\nu\in \D L(s,y,\cdot)$. Moreover $\te{LS(ii)}\leq L(t,x,v)\leq \te{RS(ii)}$ and $\te{LS(i)}=2L(t,x,v)|\varphi(s,y)-\varphi(t,x)|\leq \te{RS(i)}$.

\bf{Case 2.} Let $t\not=x$ and $\varphi(s,y)/\varphi(t,x)> 1$. Then for $v\in \D L(t,x,\cdot)$ we put $\nu=v$. We notice that $\nu\in \D L(s,y,\cdot)$. Moreover $\te{LS(i)}=0\leq\te{RS(i)}$ and $\te{LS(ii)}\leq L(t,x,v)\leq\te{RS(ii)}$.

\bf{Case 3.} Let $t=x$. If $v\in \D L(t,x,\cdot)$, then $v=0$. Put $\nu=0$. We notice
that $\nu\in \D L(s,y,\cdot)$.  Moreover $\te{LS(i)}=0\leq\te{RS(i)}$ and 
$\te{LS(ii)}=0\leq \te{RS(ii)}$.

Therefore, the proposition is proven.
\end{proof}

\begin{Prop}\label{swl-gss2}
The condition \te{(\ref{ass-1})} holds, if the following condition  is true:
\begin{itemize}[leftmargin=8mm]
\item[\te{\bf{(A)}}] For every $r>0$ there exists $k>0$ such that for every $t,s\in[0,T]$ and $x,y\in r\B$, every $v\in \D L(t,x,\cdot)$ there exists $\nu\in \D L(s,y,\cdot)$  such that
\begin{itemize}[leftmargin=7mm]
\item[\te{\bf{(i)}}] $|\nu-v|\leqslant k(1+|v|+|L(t,x,v)|)(|s-t|+|y-x|)$;
\item[\te{\bf{(ii)}}] $L(s,y,\nu)\leqslant L(t,x,v)+k(1+|v|+|L(t,x,v)|)(|s-t|+|y-x|)$.
\end{itemize}
\end{itemize}
\end{Prop}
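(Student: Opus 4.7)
The plan is to deduce the subgradient bound (\ref{ass-1}) directly from the Aubin-type continuity (A) by inserting the comparison point supplied by (A) into the liminf definition of $\noo L$. Fix $r>0$ and let $k>0$ be the constant supplied by (A) for the enlarged radius $r+1$, which leaves room for our perturbations to stay in the admissible set. Take any $(t,x,v)\in[0,T]\times r\B\times\R^n$ and any $(w_1,w_2,p)\in\noo L(t,x,v)$. Necessarily $(t,x,v)\in\D L$ (otherwise $\noo L(t,x,v)=\emptyset$), so $v\in\D L(t,x,\cdot)$; write $M:=1+|v|+|L(t,x,v)|$.

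The second step is to perturb in an arbitrary unit direction. Fix $(\alpha,\beta)\in\R\times\R^n$ with $|(\alpha,\beta)|=1$, and for each small $\tau>0$ set $(s_\tau,y_\tau):=(t+\tau\alpha,\,x+\tau\beta)\in[0,T]\times(r+1)\B$ (using the extension of $L$ in $t$). Condition (A) produces $\nu_\tau\in\D L(s_\tau,y_\tau,\cdot)$ satisfying simultaneously
\begin{equation*}
|\nu_\tau-v|\leq kM\tau(|\alpha|+|\beta|)\leq \sqrt{2}\,kM\tau,\qquad L(s_\tau,y_\tau,\nu_\tau)-L(t,x,v)\leq kM\tau(|\alpha|+|\beta|).
\end{equation*}
In particular $(s_\tau,y_\tau,\nu_\tau)\to(t,x,v)$ at rate $O(\tau)$, which is exactly what will let us extract a genuine first-order estimate.

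The third step plugs this comparison point into (\ref{subgra}). For any $\epsilon>0$ and all $\tau$ sufficiently small,
\begin{equation*}
L(s_\tau,y_\tau,\nu_\tau)-L(t,x,v)-\tau\langle w_1,\alpha\rangle-\tau\langle w_2,\beta\rangle-\langle p,\nu_\tau-v\rangle\;\geq\;-\epsilon\,|(\tau\alpha,\tau\beta,\nu_\tau-v)|.
\end{equation*}
Rearranging, using the two (A)-bounds and the Cauchy--Schwarz estimate $|\langle p,\nu_\tau-v\rangle|\leq |p|\cdot kM\tau(|\alpha|+|\beta|)$, and noting $|(\tau\alpha,\tau\beta,\nu_\tau-v)|\leq \tau\sqrt{1+2k^2M^2}$, I divide by $\tau$ to obtain
\begin{equation*}
\langle w_1,\alpha\rangle+\langle w_2,\beta\rangle\;\leq\; kM(1+|p|)(|\alpha|+|\beta|)+\epsilon\sqrt{1+2k^2M^2}.
\end{equation*}
Letting $\epsilon\to 0^+$ and then taking the supremum over unit $(\alpha,\beta)$ (using $\sup_{|(\alpha,\beta)|=1}(|\alpha|+|\beta|)=\sqrt{2}$), the left side becomes $|(w_1,w_2)|$, yielding
\begin{equation*}
|(w_1,w_2)|\leq \sqrt{2}\,kM(1+|p|),
\end{equation*}
which is (\ref{ass-1}) with constant $\sqrt{2}\,k$.

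The main obstacle is really just a bookkeeping one: making sure that the liminf error in (\ref{subgra}) and the transverse contribution from $\nu_\tau-v$ both end up being $O(\tau)$, comparable to the tangential $\tau(\langle w_1,\alpha\rangle+\langle w_2,\beta\rangle)$ term. The crucial input is part (i) of (A), which forces $|\nu_\tau-v|=O(\tau)$ with a constant depending only on $M$; without this, one could not extract a clean first-order bound on $(w_1,w_2)$ in terms of $(1+|p|)$. The rest of the argument is a direct translation between the geometric Aubin-type inequality (A) and the analytic subgradient condition (\ref{ass-1}).
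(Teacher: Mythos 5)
Your proof is correct and follows essentially the same route as the paper's: condition (A) supplies, along each spatial direction, a curve $\tau\mapsto(s_\tau,y_\tau,\nu_\tau)$ emanating from $(t,x,v)$ with velocity and $L$-increment controlled by $M=1+|v|+|L(t,x,v)|$, and feeding this into the first-order subgradient inequality and dividing by $\tau$ yields the bound on $|(w_1,w_2)|$. The only (harmless) differences are presentational: the paper specializes at once to the extremal direction $(w_1,w_2)/|(w_1,w_2)|$, extracts a convergent subsequence of $n(v_n-v)$ and invokes the subderivative property from Rockafellar--Wets, whereas you work directly with the liminf definition, handle the cross term $\langle p,\nu_\tau-v\rangle$ by Cauchy--Schwarz, and take a supremum over unit directions at the end, arriving at the constant $\sqrt{2}\,k$ in place of $2k$.
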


\begin{proof} We extend $L$ in the following way: $L(t,x,v):=L(0,x,v)$ for  $t<0$ and  $L(t,x,v):=L(T,x,v)$ for $t>T$. Fix $r>0$ and choose $k>0$ for $1+r$ in such a way that
the condition (A) holds. Let $t\in[0,T]$, $x\in r\B$, $v\in\R^n$ and $(w_1,w_2,p)\in\noo L(t,x,v)$. Without loss of generality we can assume that $(w_1,w_2)\not=0$. Let $(t_n,x_n):=(t,x)+(w_1,w_2)/[n|(w_1,w_2)|]$, then $x_n\in (1+r)\B\,$. Since $v\in\D L(t,x,\cdot)$, then there exist $v_n\in\D L(t_n,x_n,\cdot)$ such that
\begin{itemize}[leftmargin=6.5mm]
\item[(i)] $|v_n-v|\leqslant 2k(1+|v|+|L(t,x,v)|)\,|(t_n,x_n)-(t,x)|$,
\item[(ii)] $L(t_n,x_n,v_n)\leqslant L(t,x,v)+2k(1+|v|+|L(t,x,v)|)\,|(t_n,x_n)-(t,x)|$.
\end{itemize}
We put $b_n:=n(v_n-v)$ and notice that  (i) implies that $|b_n|\leq 2k(1+|v|+|L(t,x,v)|)$. Therefore, a sequence $\{b_n\}_{n\in\N}$ is bounded, so there exists a subsequence
(denoted again by) $b_n\rightarrow b$. Obviously, the following inequality is satisfied
\begin{eqnarray}\label{nwa1}
|b|\leq 2k(1+|v|+|L(t,x,v)|).
\end{eqnarray}
Since $(w_1,w_2,p)\in\noo L(t,x,v)$, then  the property \cite[s. 301]{R-W} or \cite[Chap. 6]{A-F} and (ii)
imply
\begin{eqnarray}\label{oa1}
\nonumber\left\langle\,(w_1,w_2,p),\left(\frac{(w_1,w_2)}{|(w_1,w_2)|},b\right)\,\right\rangle &\leq& d\, L(t,x,v)\left(\frac{(w_1,w_2)}{|(w_1,w_2)|},b\right)\\
\nonumber &\leq& \liminf_{n}\frac{L(t_n,x_n,v_n)-L(t,x,v)}{|(t_n,x_n)-(t,x)|}\\
&\leq& 2k(1+|v|+|L(t,x,v)|).
\end{eqnarray}

\noindent From the inequality \eqref{oa1} and \eqref{nwa1} for $(w_1,w_2,p)\in\noo L(t,x,v)$
we obtain
\begin{eqnarray*}
|(w_1,w_2)| &\leq& 2k(1+|v|+|L(t,x,v)|)+|p||b|\\
&\leq& 2k(1+|v|+|L(t,x,v)|)(1+|p|).
\end{eqnarray*}
So the proposition is proven.
\end{proof}

\begin{Rem}
The only difference between the assertion of Proposition \ref{wlpq1} and the condition \te{(A)}
is contained in  modulus.
From Proposition \ref{wlpq1} and \ref{swl-gss2} we obtain that the Lagrangian $L$ given by \te{(\ref{ex-L})} with $\varphi\in\L$  satisfies the Loewen-Rockafellar condition  \te{(\ref{ass-1})}.
\end{Rem}

\vspace{0.3mm}

\end{document}